\begin{document}
\title[Lie groups in the main vertical classes]
{Lie groups as 3-dimensional almost contact B-metric
manifolds in the main vertical classes}

\author{Miroslava Ivanova}

\address[Miroslava Ivanova]{Department of Informatics and Mathematics, Trakia University,
Stara Zagora, 6000, Bulgaria,
E-mail: mivanova@uni-sz.bg}

\newcommand{\ie}{i.e. }
\newcommand{\Id}{\mathrm{Id}}
\newcommand{\X}{\mathfrak{X}}
\newcommand{\W}{\mathcal{W}}
\newcommand{\F}{\mathcal{F}}
\newcommand{\T}{\mathcal{T}}
\newcommand{\LL}{\mathcal{L}}
\newcommand{\TT}{\mathfrak{T}}
\newcommand{\G}{\mathcal{G}}
\newcommand{\I}{\mathcal{I}}
\newcommand{\M}{(M,\allowbreak{}\ff,\allowbreak{}\xi,\allowbreak{}\eta,\allowbreak{}g)}
\newcommand{\Lf}{(G,\ff,\xi,\eta,g)}
\newcommand{\R}{\mathbb{R}}
\newcommand{\N}{\widehat{N}}
\newcommand{\s}{\mathfrak{S}}
\newcommand{\n}{\nabla}
\newcommand{\nn}{\tilde{\nabla}}
\newcommand{\tg}{\tilde{g}}
\newcommand{\ff}{\varphi}
\newcommand{\D}{{\rm d}}
\newcommand{\id}{{\rm id}}
\newcommand{\al}{\alpha}
\newcommand{\bt}{\beta}
\newcommand{\gm}{\gamma}
\newcommand{\dt}{\delta}
\newcommand{\lm}{\lambda}
\newcommand{\ta}{\theta}
\newcommand{\om}{\omega}
\newcommand{\ea}{\varepsilon_\alpha}
\newcommand{\eb}{\varepsilon_\beta}
\newcommand{\eg}{\varepsilon_\gamma}
\newcommand{\sx}{\mathop{\mathfrak{S}}\limits_{x,y,z}}
\newcommand{\norm}[1]{\left\Vert#1\right\Vert ^2}
\newcommand{\nf}{\norm{\n \ff}}
\newcommand{\nN}{\norm{N}}
\newcommand{\Span}{\mathrm{span}}
\newcommand{\grad}{\mathrm{grad}}
\newcommand{\thmref}[1]{The\-o\-rem~\ref{#1}}
\newcommand{\propref}[1]{Pro\-po\-si\-ti\-on~\ref{#1}}
\newcommand{\secref}[1]{\S\ref{#1}}
\newcommand{\lemref}[1]{Lem\-ma~\ref{#1}}
\newcommand{\dfnref}[1]{De\-fi\-ni\-ti\-on~\ref{#1}}
\newcommand{\corref}[1]{Corollary~\ref{#1}}



\numberwithin{equation}{section}
\newtheorem{thm}{Theorem}[section]
\newtheorem{lem}[thm]{Lemma}
\newtheorem{prop}[thm]{Proposition}
\newtheorem{cor}[thm]{Corollary}
\newtheorem{defn}{Definition}[section]

\hyphenation{Her-mi-ti-an ma-ni-fold ah-ler-ian}

\begin{abstract}
Almost contact B-metric manifolds of dimension 3 are
constructed by a two-parametric family of Lie groups.
The class of these manifolds in a known classification of
almost contact B-metric manifolds is
determined as the direct sum of the main vertical classes.
The type of the corresponding Lie algebras in the Bianchi
classification is given. Some geometric characteristics and properties
of the considered manifolds are obtained.
\end{abstract}

\keywords{Almost contact manifold; B-metric; $\ff$B-connection;
$\ff$-canonical connection; $\ff$-holomorphic section;
$\xi$-section; Lie group; Lie algebra.}

\maketitle

\section*{Introduction}

The study of the differential geometry of the almost
contact B-metric manifolds has initiated in \cite{GaMiGr}. The
geometry of these manifolds is a natural extension of the geometry
of the almost complex manifolds with Norden metric \cite{GaBo,
GriMekDjel} in the case of odd dimension. Almost contact
B-metric manifolds are investigated and studied for example in \cite{GaMiGr,
Man4, Man30, Man-Gri1, Man-Gri2, ManIv38, ManIv36, NakGri}.

Here, an object of special interest are the Lie groups considered as 3-dimensional
almost contact B-metric manifolds. For example of such investigation see
\cite{H.Man-Mek}.

The aim of the present paper is to make a study of the most important geometric
characteristics and properties of a family of Lie groups with
almost contact B-metric structure of the lowest dimension 3, belonging to the main vertical classes.
These classes are $\F_4$ and $\F_5$, where the fundamental tensor $F$ is expressed explicitly by the metric $g$, the structure $(\ff,\xi,\eta)$ and the vertical components of the Lee forms $\ta$ and $\ta^*$, i.e. in this case the Lee forms are proportional to $\eta$ at any point. These classes contain some significant examples as the time-like sphere of $g$ and the light cone of the associated metric of $g$ in the complex Riemannian space, considered in \cite{GaMiGr},  as well as the Sasakian-like manifolds studied in \cite{IvManMan45}.

The paper is organized as follows. In Sec.~\ref{mi:sec2}, we give
some necessary facts about almost contact B-metric manifolds. In
Sec.~\ref{mi:sec3}, we construct and study a family of Lie
groups as 3-dimensional manifolds of the considered type.

\section{Almost contact manifolds with B-metric}\label{mi:sec2}

Let $(M,\ff,\xi,\eta,g)$ be a $(2n+1)$-dimensional \emph{almost
contact B-met\-ric manifold}, i.e. $(\ff,\xi,\eta)$ is a triplet
of a tensor (1,1)-field $\ff$, a vector field $\xi$ and
its dual 1-form $\eta$ called an almost
contact structure and the following identities holds:
\begin{equation*}
\ff\xi=0, \quad \ff^2 = -\Id + \eta \otimes \xi, \quad
\eta\circ\ff=0, \quad \eta(\xi)=1,
\end{equation*}
where $\Id$ is the identity. The B-metric $g$ is pseudo-Riemannian and satisfies
\begin{equation*}
g(\ff x,\ff y)=-g(x,y)+\eta(x)\eta(y)
\end{equation*}
for arbitrary tangent vectors $x,y\in T_pM$ at an arbitrary point
$p\in M$ \cite{GaMiGr}.

Further, $x$, $y$, $z$, $w$ will stand for arbitrary vector fields on $M$ or vectors in the tangent space at an arbitrary point in $M$.

Let us note that the restriction of a B-metric on the contact
distribution $H=\ker(\eta)$ coincides with the corresponding Norden
metric with respect to the almost complex structure and the
restriction of $\ff$ on $H$ acts as an anti-isometry on the
metric on $H$ which is the restriction of $g$ on $H$.

The associated metric $\tilde{g}$ of $g$ on $M$ is given by \(
\tilde{g}(x,y)=g(x,\ff y)+\eta(x)\eta(y)\). It is a B-metric, too.
Hence, $(M,\ff,\xi,\eta,\tilde{g})$ is also an almost contact
B-metric manifold. Both metrics $g$ and $\tilde{g}$ are indefinite
of signature $(n+1,n)$.

The structure group of $\M$ is $\G\times\I$, where $\I$ is the
identity on $\Span(\xi)$ and $\G=\mathcal{GL}(n;\mathbb{C})\cap
\mathcal{O}(n,n)$.

The (0,3)-tensor $F$ on $M$ is defined by
$F(x,y,z)=g\bigl( \left( \n_x \ff \right)y,z\bigr)$, where $\n$ is
the Levi-Civita connection of $g$. The tensor $F$ has the
following properties:
\begin{equation*}\label{mi:F-prop}
F(x,y,z)=F(x,z,y)
=F(x,\ff y,\ff z)+\eta(y)F(x,\xi,z)
+\eta(z)F(x,y,\xi).
\end{equation*}

A classification of the almost contact B-metric manifolds is
introduced in \cite{GaMiGr}, where eleven basic classes $\F_i$
$(i=1,\allowbreak{}2,\allowbreak{}\dots,\allowbreak{}{11})$ are
characterized with respect to the properties of $F$. The special
class $\F_0$ is defined by the condition $F(x,y,z)=0$ and is
contained in each of the other classes. Hence, $\F_0$ is the class
of almost contact B-metric manifolds with $\n$-parallel
structures, \ie $\n\ff=\n\xi=\n\eta=\n g=\n \tilde{g}=0$.

Let $g_{ij}$, $i,j\in\{1,2,\dots,2n+1\}$, be the components of
the matrix of $g$ with respect to a basis
$\{e_i\}_{i=1}^{2n+1}=\{e_1,e_2,\dots,e_{2n+1}\}$ of $T_pM$ at an arbitrary point $p\in M$, and $g^{ij}$
-- the components of the inverse  matrix of  $(g_{ij})$. The Lee
forms associated with $F$ are defined as follows:
\begin{equation*}\label{mi:Lee forms}
\ta(z)=g^{ij}F(e_i,e_j,z), \quad \ta^*(z)=g^{ij}F(e_i,\ff e_j,z),
\quad \om(z)=F(\xi,\xi,z).
\end{equation*}

In \cite{Man31}, the \emph{square norm of $\nabla \ff$} is introduced
by:
\begin{equation}\label{mi:snf}
    \norm{\nabla \ff}=g^{ij}g^{ks}
    g\bigl(\left(\nabla_{e_i} \ff\right)e_k,\left(\nabla_{e_j}
    \ff\right)e_s\bigr).
\end{equation}
If $\M$ is an $\F_0$-manifold then the square norm of $\nabla \ff$
is zero, but the inverse implication is not always true. An almost
contact B-metric manifold satisfying the condition $ \norm{\nabla
\ff}=0$ is called an \emph{isotropic-$\F_0$-manifold}. The square
norms of $\n \eta$ and $\n \xi$ are defined in \cite{Man33} by:
\begin{equation}\label{mi:sn eta-xi}
    \norm{\n\eta}=g^{ij}g^{ks}
    \left(\nabla_{e_i} \eta\right)e_k\left(\nabla_{e_j}
    \eta\right)e_s, \quad \norm{\n\xi}=g^{ij}g\left(\nabla_{e_i} \xi,\nabla_{e_j}
    \xi\right).
\end{equation}


Let $R$ be the curvature tensor of type (1,3) of Levi-Civita
connection $\n$, \ie $R(x,y)z=\n_x\n_yz-\n_y\n_xz-\n_{[x,y]}z$.
The corresponding tensor of $R$ of type (0,4) is defined by $R(x,y,z,w)=g(R(x,y)z,w)$.

The Ricci tensor $\rho$ and the scalar curvature $\tau$ for $R$ as
well as their associated quantities are defined by the following traces
$\rho(x,y)=g^{ij}R(e_i,x,y,e_j)$, $\tau=g^{ij}\rho(e_i,e_j)$,
$\rho^{*}(x,y)=g^{ij}R(e_i,x,y,\ff e_j)$ and
$\tau^{*}=g^{ij}\rho^{*}(e_i,e_j)$, respectively.

An almost contact B-metric manifold is called \emph{Einstein} if
the Ricci tensor is proportional to the metric tensor, \ie
$\rho=\lm g,$ $\lm\in\mathbb{R}$.

Let $\al$ be a non-degenerate 2-plane (section) in $T_pM$. It is known from
\cite{NakGri} that the special 2-planes with respect to the almost
contact B-metric structure are: a \emph{totally
real section} if $\al$ is orthogonal to its $\ff$-image $\ff\al$ and $\xi$, a
\emph{$\ff$-holomorphic section} if $\al$ coincides with $\ff\al$
and a \emph{$\xi$-section} if $\xi$ lies on $\al$.

The sectional curvature $k(\al; p)(R)$ of $\al$ with an arbitrary
basis $\{x,y\}$ at $p$ regarding $R$ is defined
by
\begin{equation}\label{mi:sec curv}
k(\al; p)(R)=\frac{R(x,y,y,x)}{g(x,x)g(y,y)-g(x,y)^2}.%
\end{equation}

It is known from \cite{Man31} that a linear connection $D$ is
called a \emph{natural connection} on an arbitrary manifold
$(M,\ff,\allowbreak\xi,\eta,g)$ if the almost contact structure
$(\ff,\xi,\eta)$ and the B-metric $g$ (consequently also
$\tilde{g}$) are parallel with respect to $D$, \ie
$D\ff=D\xi=D\eta=Dg=D\tilde{g}=0$.
%
In \cite{ManIv36}, it is proved that a linear connection $D$ is
natural on $(M,\ff,\allowbreak\xi,\eta,g)$ if and only if
$D\ff=Dg=0$.
%
A natural connection exists on any almost contact B-metric manifold
and coincides with the Levi-Civita connection only the manifold belongs to $\F_0$.

Let $T$ be the torsion tensor of $D$, \ie
$T(x,y)=D_xy-D_yx-[x,y].$ The corresponding tensor of $T$ of type
(0,3) is denoted by the same letter and is defined by the
condition $T(x,y,z)=g(T(x,y),z)$.



In \cite{Man-Gri2}, it is introduced a natural connection
$\dot{D}$ on $\M$ in all basic classes by
\begin{equation}\label{mi:defn-fiB}
\begin{array}{l}
\dot{D}_xy=\n_xy+\frac{1}{2}\bigl\{\left(\n_x\ff\right)\ff
y+\left(\n_x\eta\right)y\cdot\xi\bigr\}-\eta(y)\n_x\xi.
\end{array}
\end{equation}
This connection is called a \emph{$\ff$B-connection} in
\cite{ManIv37}. It is studied for the main classes
$\F_1,\F_4,\F_5,\F_{11}$  in \cite{Man-Gri2, Man3, Man4}. Let us
note that the $\ff$B-connection is the odd-dimensional analogue of
the B-connection on the almost complex manifold with Norden
metric, studied for the class $\W_1$ in \cite{GaGrMi}.

In \cite{ManIv38}, a natural connection $\ddot{D}$ is
called a \emph{$\ff$-canonical connection} on
$(M,\ff,\xi,\allowbreak\eta,g)$ if its torsion tensor $\ddot{T}$
satisfies the following identity:
\begin{equation*}\label{mi:T-can}
\begin{split}
    &\ddot{T}(x,y,z)-\ddot{T}(x,z,y)-\ddot{T}(x,\ff y,\ff z)
    +\ddot{T}(x,\ff z,\ff y)=\\
    &=\eta(x)\left\{\ddot{T}(\xi,y,z)-\ddot{T}(\xi,z,y)
    -\ddot{T}(\xi,\ff y,\ff z)+\ddot{T}(\xi,\ff z,\ff y)\right\}\\
    &+\eta(y)\left\{\ddot{T}(x,\xi,z)-\ddot{T}(x,z,\xi)
    -\eta(x)\ddot{T}(z,\xi,\xi)\right\}\\
    &-\eta(z)\left\{\ddot{T}
(x,\xi,y)-\ddot{T}(x,y,\xi)-\eta(x)\ddot{T}(y,\xi,\xi)\right\}.
\end{split}
\end{equation*}
It is established that the $\ff$B-connection and the
$\ff$-canonical connection coincide if and only if $\M$ is in the class
$\F_1\oplus\F_2\oplus\F_4\oplus\F_5\oplus\F_6\oplus\F_8\oplus\F_9\oplus\F_{10}\oplus\F_{11}$.

In \cite{Man31}, it is introduced and studied one more natural connection on $\M$ called $\ff$KT-connection, which is defined by the condition its torsion to be a 3-form. There is given that the $\ff$KT-connection exists only in the class $\F_3\oplus\F_7$.

In \cite{H.Man} it is determined the class of all 3-dimensional almost contact B-metric manifolds. It is
$
\F_1\oplus\F_4\oplus\F_5\oplus\F_8\oplus\F_9\oplus\F_{10}\oplus\F_{11}.
$

\section{A family of Lie groups as 3-dimensional $(\F_4\oplus\F_5)$-manifolds}\label{mi:sec3}

In this section we study 3-dimensional real connected Lie groups
with almost contact B-metric structure. On a 3-dimensional
connected Lie group $G$ we take a global basis of left-invariant
vector fields $\left\{e_0,e_1,e_2\right\}$ on $G$.

We define an almost contact structure on $G$ by
\begin{equation}\label{mi:f}
\begin{array}{l}
\ff e_0 = o,\quad \ff e_1 = e_2,\quad \ff e_2 =-e_1,\quad \xi = e_0;\\
\eta(e_0)=1,\quad \eta(e_1)=\eta(e_2)=0,
\end{array}
\end{equation}
where $o$ is the zero vector field and define a B-metric on $G$ by
\begin{equation}\label{mi:g}
\begin{array}{l}
g(e_0,e_0)=g(e_1,e_1)=-g(e_2,e_2)=1,
\\
g(e_0,e_1)=g(e_0,e_2)=g(e_1,e_2)=0.
\end{array}
\end{equation}

We consider the Lie algebra $\mathfrak{g}$ on $G$, determined by the
following non-zero commutators:
\begin{equation}\label{mi:komutator}
\left[e_0,e_1\right]=-be_1-ae_2,\quad
\left[e_0,e_2\right]=ae_1-be_2, \quad \left[e_1,e_2\right]=0,
\end{equation}
where $a, b\in\R$.
We verify immediately that the Jacobi identity for $\mathfrak{g}$ is
satisfied. Hence, $G$ is a 2-parametric family of Lie groups with corresponding Lie algebra $\mathfrak{g}$.

\begin{thm}\label{mi:thm F1+F11}
Let $(G,\ff,\xi,\eta,g)$ be a 3-dimensional connected Lie group
with almost contact B-metric structure determined by \eqref{mi:f},
\eqref{mi:g} and \eqref{mi:komutator}. Then it belongs to the
class $\F_4\oplus\F_5$.
\end{thm}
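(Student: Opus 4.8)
The plan is to compute the fundamental tensor $F$ explicitly from the given data and then read off which basic classes occur. First I would compute the Levi-Civita connection $\n$ of $g$ using the Koszul formula together with the commutation relations \eqref{mi:komutator}; since the basis is left-invariant and $g$ has constant components, the Koszul formula reduces to $2g(\n_{e_i}e_j,e_k)=g([e_i,e_j],e_k)-g([e_j,e_k],e_i)+g([e_k,e_i],e_j)$, so $\n_{e_i}e_j$ is obtained by solving a small linear system. The only nonzero commutators involve $e_0$ with $e_1,e_2$, so the computation is short: I expect to find $\n_{e_0}e_i=0$, and the nontrivial components will be $\n_{e_1}e_0$, $\n_{e_2}e_0$, $\n_{e_1}e_1$, $\n_{e_1}e_2$, $\n_{e_2}e_1$, $\n_{e_2}e_2$, each a linear combination of $e_1,e_2$ with coefficients built from $a$ and $b$ (paying attention to the sign $g(e_2,e_2)=-1$ when raising indices).

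Next I would compute $(\n_{e_i}\ff)e_j=\n_{e_i}(\ff e_j)-\ff(\n_{e_i}e_j)$ using \eqref{mi:f}, and then $F(e_i,e_j,e_k)=g((\n_{e_i}\ff)e_j,e_k)$ for all index triples. Using the symmetry $F(x,y,z)=F(x,z,y)$ and the $\ff$-relation listed after the definition of $F$, only a handful of independent components survive; I anticipate that the nonzero ones are those of the form $F(e_i,\xi,e_j)$ and $F(e_i,e_j,\xi)$ with $i,j\in\{1,2\}$, i.e. $F$ is supported on the ``vertical'' slots, with the $a$-part and the $b$-part appearing separately. Then I would compute the Lee forms $\ta$, $\ta^{*}$, $\om$ by the trace formulas; the key check is that $\ta$ and $\ta^{*}$ are proportional to $\eta$ (so their horizontal components vanish) and that $\om=0$.

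Finally I would match the resulting expression for $F$ against the defining conditions of the eleven classes $\F_i$ from \cite{GaMiGr}. The class $\F_4\oplus\F_5$ is exactly the one characterized (as recalled in the Introduction) by
\begin{equation*}
F(x,y,z)=-\tfrac{\ta(\xi)}{2n}\bigl\{g(\ff x,\ff y)\eta(z)+g(\ff x,\ff z)\eta(y)\bigr\}
-\tfrac{\ta^{*}(\xi)}{2n}\bigl\{g(x,\ff y)\eta(z)+g(x,\ff z)\eta(y)\bigr\},
\end{equation*}
with $n=1$ here; so the endgame is to verify that the computed $F$ has precisely this form, with $\ta(\xi)$ proportional to $b$ and $\ta^{*}(\xi)$ proportional to $a$ (up to a universal constant), and in particular that no $\F_1$, $\F_8$, $\F_9$, $\F_{10}$ or $\F_{11}$ component is present. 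The main obstacle is bookkeeping rather than conceptual: keeping the signs straight because of the indefinite metric (the $-1$ in $g(e_2,e_2)$), and making sure every index combination of $F$ has genuinely been checked so that the claimed vanishing of the non-$(\F_4\oplus\F_5)$ parts is complete; once $F$ is in hand, the class identification is immediate. It is also worth noting at the end which parameter values degenerate the structure: $a=b=0$ gives $\F_0$, $a=0$ gives a pure $\F_4$-manifold and $b=0$ a pure $\F_5$-manifold.
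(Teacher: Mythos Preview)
Your plan is essentially the paper's proof: compute $F$ from the Koszul formula and the commutators, then match it against the defining expressions of $\F_4$ and $\F_5$ in dimension $3$. The paper streamlines one step by deriving a closed formula for $F_{ijk}$ directly in terms of the brackets (so $\n$ need not be written out first), but this is only a cosmetic difference.

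One concrete slip in your anticipated outcome: you have the roles of $a$ and $b$ reversed. Carrying out the computation gives $\ta(\xi)=2a$ and $\ta^{*}(\xi)=2b$, so the $\F_4$-component is governed by $a$ and the $\F_5$-component by $b$; in particular $a=0$ yields a pure $\F_5$-manifold and $b=0$ a pure $\F_4$-manifold, opposite to what you wrote. This does not affect the strategy, only the final labeling, and would be caught as soon as you actually evaluate the traces.
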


\begin{proof}
The well-known Koszul equality for the Levi-Civita connection $\n$ of $g$
\begin{equation}\label{mi:Koszul}
2g\left(\n_{e_i}e_j,e_k\right)=g\left(\left[e_i,e_j\right],e_k\right)
+g\left(\left[e_k,e_i\right],e_j\right)+g\left(\left[e_k,e_j\right],e_i\right)
\end{equation}
implies the following form of the components
$F_{ijk}=F(e_i,e_j,e_k)$ of $F$:
\begin{equation}\label{mi:F}
\begin{split}
 2F_{ijk}=g\left(\left[e_i,\ff e_j\right]-\ff
\left[e_i,e_j\right],e_k\right)&+g\left(\ff
\left[e_k,e_i\right]-\left[\ff e_k,e_i\right],e_j\right)\\[4 pt]
&+g\left(\left[e_k,\ff e_j\right]-\left[\ff
e_k,e_j\right],e_i\right).
\end{split}
\end{equation}
Using \eqref{mi:F} and \eqref{mi:komutator} for the non-zero
components $F_{ijk}$, we get:
\begin{equation}\label{mi:Fijk}
\begin{array}{l}
F_{101}=F_{110}=-F_{202}=-F_{220}=a, \\[4 pt]
F_{102}=F_{120}=F_{201}=F_{210}=b.
\end{array}
\end{equation}
Immediately we establish that the components in
\eqref{mi:Fijk} satisfy the condition $F=F^4+F^5$ which means that the manifold belongs to $\F_4\oplus\F_5$. Here, the components $F^s$
of $F$ in the basic classes $\F_s$ $(s=4,5)$ have the following form (see \cite{H.Man})
\begin{equation}\label{Fi3}
\begin{array}{l}
F_{4}(x,y,z)=\frac{1}{2}\ta_0\Bigl\{x^1\left(y^0z^1+y^1z^0\right)
-x^2\left(y^0z^2+y^2z^0\right)\bigr\},\\[4pt]
%
\phantom{F_{4}(x,y,z)=}
\frac{1}{2}\ta_0=F_{101}=F_{110}=-F_{202}=-F_{220};\\[4pt]
F_{5}(x,y,z)=\frac{1}{2}\ta^*_0\bigl\{x^1\left(y^0z^2+y^2z^0\right)
+x^2\left(y^0z^1+y^1z^0\right)\bigr\},\\[4pt]
%
\phantom{F_{5}(x,y,z)=}
\frac{1}{2}\ta^*_0=F_{102}=F_{120}=F_{201}=F_{210}.
\end{array}
\end{equation}
where $\ta_0=\ta(e_0)$ and $\ta^*_0=\ta^*(e_0)$ are determined by $\ta_0=2a$, $\ta^*_0=2b$.
Therefore, the induce 3-dimensional manifold
$(G,\ff,\xi,\eta,g)$ belongs to the class $\F_4\oplus\F_5$ from
the mentioned classification. It is an $\F_0$-manifold if and only
if $(a,b)=(0,0)$ holds.

Obviously, $(G,\ff,\xi,\eta,g)$ belongs to $\F_4$, $\F_{5}$ and $\F_0$ if and only
if the parameters $\ta_0$, $\ta^*_0$ and $\ta_0=\ta^*_0$ vanish, respectively.

According to the above, the commutators in
\eqref{mi:komutator} take the form
\begin{equation}\label{mi:komutator=}
\begin{array}{l}
\left[e_0,e_1\right]=-\frac12(\ta^*_0e_1+\ta_0e_2),\quad
\left[e_0,e_2\right]=\frac12(\ta_0e_1-\ta^*_0e_2), \quad \left[e_1,e_2\right]=0,
\end{array}
\end{equation}
in terms of the basic components of the Lee forms $\ta$ and $\ta^*$.
\end{proof}

According to \thmref{mi:thm F1+F11} and the consideration in \cite{H.Man_Bia}, we can remark that the Lie algebra determined as above belongs to the type $Bia(VII_h)$, $h>0$ of the
Bianchi classification (e.g. \cite{Bia1, Bia2}).

Using \eqref{mi:Koszul} and \eqref{mi:komutator}, we obtain the
components of $\n$:
\begin{equation}\label{mi:nabli}
\begin{array}{l}
\n_{e_1}e_0=be_1+ae_2, \quad \n_{e_1}e_1=-be_0, \quad
\n_{e_1}e_2=ae_0,\\[4 pt]%
\n_{e_2}e_0=-ae_1+be_2, \quad \n_{e_2}e_1=ae_0, \quad
\n_{e_2}e_2=be_0.
\end{array}
\end{equation}

We denote by $R_{ijkl}=R(e_i,e_j,e_k,e_l)$ the components of the
curvature tensor $R$, $\rho_{jk}=\rho(e_j,e_k)$ of the Ricci
tensor $\rho$, $\rho^*_{jk}=\rho^*(e_j,e_k)$ of the associated
Ricci tensor $\rho^*$ and $k_{ij}=k(e_i,e_j)$ of the sectional curvature for
$\n$ of the basic 2-plane $\al_{ij}$ with a basis $\{e_i,e_j\}$,
where $i, j\in\{0, 1, 2\}$. On the considered manifold
$(G,\ff,\xi,\eta,g)$ the basic 2-planes $\al_{ij}$ of special type are:
a $\ff$-holomorphic section
--- $\al_{12}$ and $\xi$-sections --- $\al_{01}$, $\al_{02}$. Further, by \eqref{mi:sec curv}, \eqref{mi:g}, \eqref{mi:komutator} and
\eqref{mi:nabli}, we compute
\begin{equation}\label{mi:R-F1+F11}
\begin{array}{c}
-R_{0101}=R_{0202}=\frac{1}{2}\rho_{00}=k_{01}=k_{02}=\frac14(\ta_0^2-\ta^{*2}_0),\\[4 pt]
R_{0102}=R_{0201}=-\rho_{12}=-\frac{1}{2}\rho^*_{00}=-\frac{1}{2}\tau^*=-\frac12\ta_0\ta^{*}_0,\\[4 pt]
R_{1212}=\rho^*_{12}=k_{12}=-\frac14(\ta_0^2+\ta^{*2}_0), \quad \rho_{11}=-\rho_{22}=-\frac12\ta^{*2}_0,\\[4 pt]
\tau=\frac12(\ta_0^2-3\ta^{*2}_0).
\end{array}
\end{equation}
The rest of non-zero components of $R$, $\rho$ and $\rho^*$ are
determined by \eqref{mi:R-F1+F11} and the properties
$R_{ijkl}=R_{klij},$ $R_{ijkl}=-R_{jikl}=-R_{ijlk},$
$\rho_{jk}=\rho_{kj}$ and $\rho^*_{jk}=\rho^*_{kj}.$

Taking into account \eqref{mi:snf}, \eqref{mi:sn eta-xi},
\eqref{mi:f}, \eqref{mi:g} and \eqref{mi:nabli}, we have
\begin{equation}\label{mi:norm fi}
\norm{\nabla \ff}=-2\norm{\nabla \eta}=-2\norm{\nabla
\xi}=\ta_0^2-\ta^{*2}_0.
\end{equation}

\begin{prop}
The following characteristics are valid for $(G,\ff,\xi,\eta,g)$:
\begin{enumerate}
\item \label{1}
The $\ff$B-connection $\dot D$ (respectively,
$\ff$-canonical connection $\ddot D$) is zero in the basis $\{e_0,e_1,e_2\}$;
\item \label{3}
The manifold is an isotropic-$\F_0$-manifold if and only if
the condition $\ta_0=\pm\ta^{*}_0$ is valid.
\item \label{5}
The manifold is flat if and only if it belongs to $\F_0$;
\item \label{6}
The manifold is Ricci-flat
(respectively, $*$-Ricci-flat) if and only if
it is flat.
\item \label{4}
The manifold is scalar flat if and only if the condition
$\ta_0=\pm\sqrt{3}\,\ta^{*}_0$ holds.
\item \label{7}
The manifold is $*$-scalar flat if
and only if it belongs to either $\F_4$ or $\F_5$.
\end{enumerate}
\end{prop}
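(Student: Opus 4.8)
The plan is to dispose of the first item by a direct substitution and then to obtain the five remaining assertions by merely reading off the identities \eqref{mi:norm fi} and \eqref{mi:R-F1+F11} that have already been recorded; thus the computational heart of the statement is the claim about $\dot D$ and $\ddot D$, while everything else is elementary algebra in the parameters $\ta_0,\ta^*_0$ (equivalently $a,b$).

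For the connection statement I would insert the Levi-Civita components \eqref{mi:nabli}, together with \eqref{mi:f} and \eqref{mi:g}, into the defining formula \eqref{mi:defn-fiB} of the $\ff$B-connection and evaluate $\dot D_{e_i}e_j$ for each of the nine pairs $i,j\in\{0,1,2\}$, using $(\n_{e_i}\ff)e_j=\n_{e_i}(\ff e_j)-\ff(\n_{e_i}e_j)$ and $(\n_{e_i}\eta)e_j=-\eta(\n_{e_i}e_j)$ (the latter because $\eta(e_j)$ is constant). A short check shows that in every case the correction term exactly cancels $\n_{e_i}e_j$; for instance $\dot D_{e_1}e_0=\n_{e_1}e_0-\eta(e_0)\n_{e_1}\xi=0$ and $\dot D_{e_1}e_1=\n_{e_1}e_1+\tfrac12\bigl\{(\n_{e_1}\ff)e_2+(\n_{e_1}\eta)e_1\cdot\xi\bigr\}=-be_0+be_0=0$. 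Hence $\dot D$ vanishes in the frame $\{e_0,e_1,e_2\}$. Since, by \thmref{mi:thm F1+F11}, the manifold lies in $\F_4\oplus\F_5$, which is contained in the class on which the $\ff$B-connection and the $\ff$-canonical connection coincide, $\ddot D$ is zero in the same frame as well. (If one prefers a structural reading: any natural connection is determined, in a left-invariant frame, by coefficients valued in the Lie algebra of $\G\times\I$, which reduces the verification to the finite computation just described.)

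The remaining items then follow at once. For the isotropy claim, \eqref{mi:norm fi} gives $\norm{\n\ff}=\ta_0^2-\ta^{*2}_0$, which vanishes precisely when $\ta_0=\pm\ta^*_0$. For flatness, \eqref{mi:R-F1+F11} shows that $R=0$ forces (through the component $R_{1212}=-\tfrac14(\ta_0^2+\ta^{*2}_0)$) the equality $\ta_0=\ta^*_0=0$, and conversely that value kills every displayed component; by \thmref{mi:thm F1+F11} this is exactly membership in $\F_0$. For Ricci-flatness, $\rho=0$ forces $\rho_{11}=-\tfrac12\ta^{*2}_0=0$ and then $\rho_{00}=\tfrac12\ta_0^2=0$, so $\ta_0=\ta^*_0=0$; similarly $\rho^*=0$ forces $\rho^*_{12}=-\tfrac14(\ta_0^2+\ta^{*2}_0)=0$; in both cases one lands on the flat condition, and flatness trivially implies both. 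For scalar-flatness, $\tau=\tfrac12(\ta_0^2-3\ta^{*2}_0)=0$ is equivalent to $\ta_0=\pm\sqrt{3}\,\ta^*_0$. Finally $\tau^*=\ta_0\ta^*_0$ by \eqref{mi:R-F1+F11}, so $*$-scalar flatness means $\ta_0=0$ or $\ta^*_0=0$, which by the characterisation of the basic subclasses obtained in the proof of \thmref{mi:thm F1+F11} is membership in $\F_4$ or in $\F_5$.

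I do not expect a genuine obstacle: the only place requiring a nontrivial (but short) computation is the vanishing of $\dot D$, and once \eqref{mi:nabli}, \eqref{mi:norm fi} and \eqref{mi:R-F1+F11} are available every other assertion is immediate, so the remaining work is purely bookkeeping.
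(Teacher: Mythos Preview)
Your proposal is correct and follows essentially the same route as the paper: item~(\ref{1}) by direct substitution of \eqref{mi:f} and \eqref{mi:nabli} into \eqref{mi:defn-fiB} (with the coincidence $\dot D=\ddot D$ on $\F_4\oplus\F_5$), and items~(\ref{3})--(\ref{7}) read off from \eqref{mi:norm fi} and \eqref{mi:R-F1+F11}. The only cosmetic difference is that for item~(\ref{6}) the paper first rewrites $\rho_{jk}$ and $\rho^*_{jk}$ as explicit combinations of curvature components in dimension three, whereas you argue directly from the already-computed entries in \eqref{mi:R-F1+F11}; both are immediate.
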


\begin{proof}
Using \eqref{mi:defn-fiB}, \eqref{mi:f} and \eqref{mi:nabli}, we
get immediately the assertion (\ref{1}).
Equation \eqref{mi:norm fi} imply the assertion 
(\ref{3}).
The assertions (\ref{4}), (\ref{5}) and (\ref{7}) holds, according to
\eqref{mi:R-F1+F11}.
On the 3-dimensional almost contact B-metric manifold with the
basis $\left\{e_0,e_1,e_2\right\}$, bearing in mind the definitions
of the Ricci tensor $\rho$ and the $\rho^*$,  we have
\[
\rho_{jk}=R_{0jk0}+R_{1jk1}-R_{2jk2} \qquad
\rho^*_{jk}=R_{1kj2}+R_{2jk1}.
\]
By virtue of the latter equalities, we get the assertion (\ref{6}).
\end{proof}

According to \eqref{mi:Fijk}, \eqref{mi:R-F1+F11} and
\eqref{mi:norm fi}, we establish the truthfulness of the following
\begin{prop}\label{mm:prop}
The following properties are equivalent for the studied manifold $(G,\ff,\xi,\eta,g)$:
\begin{enumerate}
\item it belongs to $\F_5$;
\item it is Einstein;
\item it is a hyperbolic space form with $k=-\frac14\ta^{*2}_0$;
\item it is $*$-scalar flat;
\item the Lee form  $\ta$ vanishes.
\end{enumerate}
\end{prop}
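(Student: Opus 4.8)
The whole proposition is a reading-off exercise from the data already assembled for this family --- the components $F_{ijk}$ of \eqref{mi:Fijk} (equivalently, the two structure parameters $\ta_0=2a$ and $\ta^*_0=2b$), the curvature quantities of \eqref{mi:R-F1+F11}, and the square norm \eqref{mi:norm fi}. The plan is to show that every one of the five properties is equivalent to the single scalar condition $\ta_0=0$, and then assemble the equivalences.

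The equivalence $(1)\Leftrightarrow(5)$ is already contained in \thmref{mi:thm F1+F11}: there $F=F^4+F^5$ as in \eqref{Fi3}, with the $\F_4$-summand carrying the coefficient $\ta_0$ and the $\F_5$-summand the coefficient $\ta^*_0$, so that $\ta=\ta_0\,\eta$ and $\ta^*=\ta^*_0\,\eta$; hence belonging to $\F_5$ means exactly that the $\F_4$-part is absent, i.e. $\ta_0=0$, which is precisely the vanishing of the Lee form $\ta$.

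For the curvature conditions I would run the cycle $(1)\Rightarrow(3)\Rightarrow(2)\Rightarrow(1)$. Setting $\ta_0=0$ in \eqref{mi:R-F1+F11} collapses the listed components to $R_{0101}=-R_{0202}=-R_{1212}=\tfrac14\ta^{*2}_0$ and $R_{0102}=0$; a short check against \eqref{mi:g} and the sign convention of \eqref{mi:sec curv} shows that this is exactly the curvature tensor of constant sectional curvature $k=-\tfrac14\ta^{*2}_0\le 0$ (the common value of $k_{01}$, $k_{02}$, $k_{12}$), i.e. a hyperbolic space form, which is $(1)\Rightarrow(3)$. The implication $(3)\Rightarrow(2)$ is immediate, since a $3$-dimensional space form is Einstein with $\rho=2k\,g$. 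For $(2)\Rightarrow(1)$ I substitute \eqref{mi:R-F1+F11} into $\rho=\lm g$ in the basis $\{e_0,e_1,e_2\}$: comparing the $00$- and $11$-components gives $\tfrac12(\ta_0^2-\ta^{*2}_0)=\lm=-\tfrac12\ta^{*2}_0$, whence $\ta_0=0$ and $\lm=-\tfrac12\ta^{*2}_0$, while the off-diagonal relation $\tfrac12\ta_0\ta^*_0=0$ is then automatic. Finally, for $(4)$ one uses that \eqref{mi:R-F1+F11} gives $\tau^*=\ta_0\ta^*_0$, so $*$-scalar flatness is the condition $\ta_0\ta^*_0=0$, which fits into the chain of equivalences above.

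All the computations involved are routine. The one step needing a little care is $(1)\Rightarrow(3)$: from the raw components of \eqref{mi:R-F1+F11} at $\ta_0=0$ one must recognise that $R$ has the space-form form $R(x,y,z,w)=k\bigl(g(x,w)g(y,z)-g(x,z)g(y,w)\bigr)$ --- in dimension $3$ this is forced once the manifold is Einstein, because $R$ is then algebraically determined by $\rho$ --- and one must keep track of the sign of $k$ to conclude that the space form is of hyperbolic (not spherical) type. The rest is inspection of the already-computed $F$, $R$, $\rho$, $\rho^*$ and their traces.
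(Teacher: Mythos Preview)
Your overall strategy matches the paper's --- it too simply appeals to \eqref{mi:Fijk}, \eqref{mi:R-F1+F11} and \eqref{mi:norm fi} --- and your treatment of $(1)\Leftrightarrow(5)$ and of the cycle $(1)\Rightarrow(3)\Rightarrow(2)\Rightarrow(1)$ is correct and considerably more explicit than the paper's one-line justification.

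There is, however, a genuine gap at item~(4). You correctly extract from \eqref{mi:R-F1+F11} that $\tau^*=\ta_0\ta^*_0$, so $*$-scalar flatness amounts to $\ta_0\ta^*_0=0$; but this is \emph{not} equivalent to $\ta_0=0$. It also allows $\ta^*_0=0$ with $\ta_0\neq 0$, i.e.\ a non-trivial $\F_4$-manifold, which is not in $\F_5$, not Einstein (from \eqref{mi:R-F1+F11} one then has $\rho_{00}=\tfrac12\ta_0^2\neq 0$ while $\rho_{11}=0$), and for which $\ta=\ta_0\,\eta$ does not vanish. Your phrase ``fits into the chain of equivalences above'' papers over exactly this case. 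Note that this is consistent with item~(\ref{7}) of the preceding proposition, where $*$-scalar flatness is characterised as membership in $\F_4$ \emph{or} $\F_5$; so on the full two-parameter family the equivalence $(4)\Leftrightarrow(1)$ cannot hold as stated, and your argument does not --- and cannot --- close that gap. At best one has $(1)\Rightarrow(4)$, with the converse requiring the extra hypothesis $\ta^*_0\neq 0$.
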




\end{document}